\newtheorem{thm}{Theorem}[section]
\newtheorem{prop}[thm]{Proposition}%[section]
\theoremstyle{definition}
\newtheorem{prob}[thm]{Problem}
\def\less{\backslash}
\def\marker{\>\hbox{${\vcenter{\vbox{
    \hrule height 0.4pt\hbox{\vrule width 0.4pt height 6pt
    \kern6pt\vrule width 0.4pt}\hrule height 0.4pt}}}$}\>}
\newcommand{\es}{{\rm es}_{\chi}}
\title{Some extremal results on the chromatic-stability index}
\author{\small {Shenwei Huang$^1$, Sandi Klav\v{z}ar$^{2,3,4}$\footnote{The corresponding author.},\ Hui Lei$^5$, Xiaopan Lian$^6$ and Yongtang Shi$^6$}\\
{\small $^1$ College of Computer Science, Nankai University, Tianjin 300350, China}\\
{\small $^2$ Faculty of Mathematics and Physics, University of Ljubljana, Slovenia}\\
{\small $^3$ Faculty of Natural Sciences and Mathematics, University of Maribor, Slovenia}\\
{\small $^4$ Institute of Mathematics, Physics and Mechanics, Ljubljana, Slovenia}\\
{\small $^5$ School of Statistics and Data Science, LPMC and KLMDASR}\\
{\small Nankai University, Tianjin 300071, China}\\
{\small $^6$ Center for Combinatorics and LPMC, Nankai University, Tianjin, China}\\
{\small Email: shenweihuang@nankai.edu.cn; sandi.klavzar@fmf.uni-lj.si}\\ {\small hlei@nankai.edu.cn; xiaopanlian@mail.nankai.edu.cn; shi@nankai.edu.cn}\\
}
\date{\today}
\begin{document}
\maketitle
\begin{abstract}
The $\chi$-stability index ${\rm es}_{\chi}(G)$ of a graph $G$ is the minimum number of its edges whose removal results in a graph with the chromatic number smaller than that of $G$. In this paper three open problems from [European J.\ Combin.\ 84 (2020) 103042] are considered. Examples are constructed which demonstrate that a known characterization of $k$-regular ($k\le 5$) graphs $G$ with ${\rm es}_{\chi}(G) = 1$ does not extend to $k\ge 6$. Graphs $G$ with $\chi(G)=3$ for which ${\rm es}_{\chi}(G)+{\rm es}_{\chi}(\overline{G}) = 2$ holds are characterized. Necessary conditions on graphs $G$ which attain a known upper bound on ${\rm es}_{\chi}(G)$ in terms of the order and the chromatic number of $G$ are derived. The conditions are proved to be sufficient when $n\equiv 2  \pmod 3$ and $\chi(G)=3$.
\end{abstract}

\noindent
{\bf Keywords:} chromatic number; chromatic-stability index; regular graph \\

\noindent
{\bf AMS Subj.\ Class.\ (2020)}: 05C15, 05C35

\section{Introduction}

\baselineskip 17pt
If ${\mathcal I}$ is a graph invariant and $G$ a graph, then it is natural to consider the minimum number of vertices of $G$ whose removal results in an induced subgraph $G'$ with ${\mathcal I}(G') \ne {\mathcal I}(G)$ or with $E(G') = \emptyset$, see~\cite{alikhani-2020+}. Let us call this number the {\em ${\mathcal I}$-stability number} of $G$ and denote it by $vs_{\mathcal I}(G)$. Similarly one can be interested in the minimum number of edges that has to be removed in order to obtain a spanning subgraph $G'$ with ${\mathcal I}(G') \ne {\mathcal I}(G)$ or with $E(G') = \emptyset$. In this case let us call the minimum number of edges the {\em ${\mathcal I}$-stability index} of $G$ and denote it by $es_{\mathcal I}(G)$.

In this paper we are interested in the $\chi$-stability index $\es$, spelled out as {\em chromatic-stability index}. The $\chi$-stability index $\es(G)$ of a graph $G$ with at least one edge is thus the minimum number of edges of $G$ whose removal results in a graph with the chromatic number smaller than that of $G$. If $E(G) = \emptyset$, then $\es(G) = 0$. It should be noted that in some papers the term ``chromatic edge-stability number" was used, but within the above proposed general framework, as well as since the investigation of the $\chi'$-stability number has been initiated in~\cite{alikhani-2020+}, this earlier naming would lead to a confusing terminology.

The $\chi$-stability index was first studied by Staton~\cite{S1980}, who provided upper bounds $\es$ for regular graphs in terms of the size of a given graph. The invariant was subsequently  investigated in~\cite{arumugam-2008, BKM2020, KMM2018}. In this paper we continue this line of the research and are primarily interested in the following three open problems on the chromatic-stability index.

\begin{prob} [\cite{AKMN2020, arumugam-2008}]
\label{Oprob5.3}
Characterize graphs $G$ with $\es(G)=1$.
\end{prob}

\begin{prob} [\cite{AKMN2020}]
\label{Oprob5.2}
Characterize graphs $G$ with $\es(G)+\es(\overline{G})=2$.
\end{prob}

In~\cite{AKMN2020} it was proved that if $G$ is a graph of order $n$ with $r=\chi(G)$, then
\begin{equation}
\label{eq:upper}
\es(G)\leq\begin{cases}
		\lfloor\frac{n}{r}\rfloor\lfloor\frac{n}{r}+1\rfloor\,;      & n\equiv r-1 \pmod r,  \\[2mm]
		\lfloor\frac{n}{r}\rfloor^2\ ;  & \text{otherwise}.\\
	\end{cases}
\end{equation}

The third open problem of our interest now read as follows.

\begin{prob} [\cite{AKMN2020}]
\label{Oprob5.1}
Characterize graphs that attain the upper bound in~\eqref{eq:upper}.
\end{prob}

In the rest of this section we recall definitions needed in this paper. In Section~\ref{sec:first-open} we consider graphs $G$ with $\es(G) = 1$ and construct examples which demonstrate that a known characterization of $k$-regular graphs $G$ with $\es(G) = 1$ does not extend to $k\ge 6$. Then, in Section~\ref{sec:second-open}, we characterize graphs $G$ with $\chi(G)=3$ for which $\es(G)+\es(\overline{G}) = 2$ holds. In the concluding section we obtain necessary structural conditions on graphs $G$ which  attain the upper bound in~\eqref{eq:upper}. The conditions are proved to be sufficient when $n\equiv 2  \pmod 3$ and $\chi(G)=3$.

The {\it chromatic number} $\chi(G)$ of a graph $G$ is the smallest integer $k$ such that $G$ admits a proper coloring of its vertices using $k$ colors. Unless stated otherwise, we will assume that the colors are from the set $[k] = \{1,\ldots, k\}$.  A {\it $\chi(G)$-coloring}, or simply {\it $\chi$-coloring} of $G$ is a proper coloring using $\chi(G)$ colors. In a coloring of $G$, a set of vertices having the same color form a {\it color class}. If $c$ is a $k$-coloring of $G$ with color classes $C_1, \ldots, C_k$, then we will identify $c$ with $(C_1,\ldots, C_k)$, that is, we will say that $c$ is a coloring $(C_1,\ldots, C_k)$. When we will wish to emphasize that these color classes correspond to $c$, we will denote them by $(C_1^c,\ldots, C_k^c)$. If $c$ is a coloring of  $G$ and $A\subseteq V(G)$, then let $c(A) = \bigcup_{a\in A} c(a)$. Let $c^*(G)$ denote the cardinality of a smallest color class among all $\chi$-colorings of $G$. If $c^*(G) = 1$, then we say that $G$ has a {\it singleton color class}. The {\it chromatic bondage number} $\rho(G)$ of $G$ denotes the minimum number of edges between two color classes among all $\chi$-colorings of a graph $G$. Note that  $\es(G)\leq \rho(G)$ clearly holds.

For $v\in V(G)$, let $d_G(v)$ and $N_G(v)$ denote the degree and the open neighborhood of $v$ in $G$, respectively. If $A\subseteq V(G)$, then let $N_G(A)=(\cup_{v\in A}N_G(v))\less A$. For $A,B\subseteq V(G)$, let $E[A,B]$ be the set of edges which have one endpoint in $A$ and the other in $B$, and let $e(A,B)=|E[A,B]|$. The subgraph of $G$ induced by $A\subseteq V(G)$ will be denoted by $G[A]$. The  {\it girth} $g(G)$ of a graph $G$ is the length of a shortest cycle in $G$. The order of a largest complete subgraph in $G$ is the {\em clique number} $\omega(G)$ of $G$. The {\it complement} of $G$ is denoted by $\overline{G}$.

%%%%%%%%%%%%%%%%%%%%%%%%%%%
\section{On Problem \ref{Oprob5.3}}
\label{sec:first-open}
%%%%%%%%%%%%%%%%%%%%%%%%%%%

Problem~\ref{Oprob5.3} which asks for a characterization of graphs $G$ with $\es(G)=1$ has been independently posed in~\cite[Problem 2.18]{arumugam-2008} and in~\cite[Problem 5.3]{AKMN2020}. The two equivalent reformulations of the condition $\es(G)=1$ from the next proposition are due to~\cite[Proposition 2.2]{KMM2018} and~\cite[Remark 2.15]{arumugam-2008}, respectively. To be self-contained, we include a simple proof of the result.

\begin{prop}\label{prop:easy-char}
If $G$ is a graph with $\chi(G) \ge 2$, then the following claims are equivalent.
\begin{description}
\item[(i)] $\es(G)=1$.
\item[(ii)] $\rho(G) = 1$.
\item[(iii)] $G$ admits a $\chi(G)$-coloring $(C_1, \ldots, C_{\chi(G)})$, where $|C_1| = 1$ and $e(C_1,C_2) = 1$.
\end{description}
\end{prop}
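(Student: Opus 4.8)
The plan is to establish the three claims by proving the cyclic chain of implications $\text{(ii)} \Rightarrow \text{(i)} \Rightarrow \text{(iii)} \Rightarrow \text{(ii)}$. Throughout I write $r = \chi(G)$. Two preliminary observations organize everything. First, since $\chi(G)\ge 2$ the graph $G$ has at least one edge, and no edge removal can lower $\chi$ unless at least one edge is deleted; hence $\es(G)\ge 1$. Second, in any $\chi$-coloring every pair of color classes must be joined by at least one edge, for otherwise two classes with no joining edge could be merged into a single independent set, producing an $(r-1)$-coloring and contradicting $\chi(G)=r$; thus $\rho(G)\ge 1$ as well.

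For $\text{(ii)} \Rightarrow \text{(i)}$ I would simply combine the inequality $\es(G)\le\rho(G)$ recorded before the statement with the bound $\es(G)\ge 1$ just noted: if $\rho(G)=1$, then $1\le\es(G)\le 1$, so $\es(G)=1$.

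The substantive step is $\text{(i)} \Rightarrow \text{(iii)}$. Assuming $\es(G)=1$, fix an edge $uv$ whose deletion lowers the chromatic number; since deleting a single edge can drop $\chi$ by at most one, $\chi(G-uv)=r-1$. Take a proper $(r-1)$-coloring $c$ of $G-uv$. In $c$ the vertices $u$ and $v$ must receive the same color, say color $1$, for otherwise $c$ would already properly color $G$, contradicting $\chi(G)=r$. I then recolor $v$ with the fresh color $r$ to obtain an $r$-coloring $c'$ of $G$: it is proper because the only monochromatic pair of $c$ that becomes an edge in $G$ is $uv$, which is now bichromatic, and it uses all $r$ colors since $u$ still carries color $1$. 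Setting $C_1:=\{v\}$ gives a singleton class, and letting $C_2$ be the color-$1$ class of $c'$ (which contains $u$), the only neighbor of $v$ in $C_2$ is $u$: indeed $c$ is proper on $G-uv$, so no neighbor of $v$ other than $u$ carries color $1$. Hence $e(C_1,C_2)=1$, as required. Finally, for $\text{(iii)} \Rightarrow \text{(ii)}$, the hypothesis directly exhibits a $\chi$-coloring in which two color classes are joined by exactly one edge, so $\rho(G)\le 1$; combined with $\rho(G)\ge 1$ this forces $\rho(G)=1$.

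I expect the only genuine obstacle to be the bookkeeping in $\text{(i)} \Rightarrow \text{(iii)}$: one must verify both that splitting $v$ off into a new color keeps the coloring proper and that it creates exactly one crossing edge between the new singleton and the old color class of $u$. Both points rest on the single observation that $u$ is the unique color-$1$ neighbor of $v$ once the edge $uv$ is reinstated, which is exactly what the minimality $\es(G)=1$ secures.
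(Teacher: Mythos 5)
Your proposal is correct and follows essentially the same route as the paper: the cycle (i) $\Rightarrow$ (iii) $\Rightarrow$ (ii) $\Rightarrow$ (i), with the key step being to take a critical edge $uv$, observe that any $(\chi(G)-1)$-coloring of $G-uv$ must assign $u$ and $v$ the same color, and recolor one endpoint with a fresh color. You are merely more explicit than the paper about the lower bounds $\es(G)\ge 1$ and $\rho(G)\ge 1$ and about verifying $e(C_1,C_2)=1$, which the paper leaves implicit.
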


\begin{proof}
Let $\es(G)=1$ and let $e=uv\in E(G)$ be an edge such that $\chi(G-e) = \chi(G) - 1$. If $c$ is a $(\chi(G) - 1)$-coloring of $G-e$, then $c(u) = c(v)$, for otherwise $c$ would be a proper coloring of $G$ (using only $\chi(G) - 1$ colors). Recoloring $u$ with a new color yields a coloring of $G$ as required by (iii). Hence (i) implies (iii). The implication (iii) $\Rightarrow$ (ii) is obvious, and (ii) $\Rightarrow$ (i) follows from the already noted fact that $\es(G) \le \rho(G)$ holds.
\end{proof}

Although Proposition~\ref{prop:easy-char} formally gives two characterizations of graphs $G$ with $\es(G)=1$, it should be understood that Problem~\ref{Oprob5.3} asks for a {\em structural characterization} of such graphs. A partial solution of the problem is provided in the following result.

\begin{thm}  {\rm (\cite[Theorem 4.4]{AKMN2020})}.
\label{thm:k-regular}
Let $G$ be a connected, $k$-regular graph, $k\leq 5$. Then $\es(G)=1$ if and only if $G$ is $K_2$, $G$ is an odd cycle, or $\chi(G)>3$ and $c^{\star}(G) = 1$.
\end{thm}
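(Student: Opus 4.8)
The plan is to reduce everything to the reformulation in Proposition~\ref{prop:easy-char}(iii): $\es(G)=1$ holds exactly when $G$ has a $\chi(G)$-coloring $(C_1,\dots,C_{\chi(G)})$ with $C_1=\{v\}$ a singleton and $v$ having precisely one neighbour in $C_2$. Two immediate consequences will drive the whole argument. First, if $\es(G)=1$ then $G$ has a singleton color class, so $c^*(G)=1$. Second, whenever $G$ carries a $\chi(G)$-coloring with a singleton class $\{v\}$, the vertex $v$ must have at least one neighbour in each of the remaining $\chi(G)-1$ classes, since otherwise recoloring $v$ with a missing color would produce a proper $(\chi(G)-1)$-coloring, contradicting the value of $\chi(G)$.

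For the sufficiency direction I would treat the three cases separately. The cases $G=K_2$ and $G$ an odd cycle are immediate: deleting any edge drops the chromatic number to $1$ and $2$, respectively, so $\es(G)=1$. For the remaining case, assume $\chi(G)>3$ and $c^*(G)=1$, and fix a $\chi(G)$-coloring with singleton class $\{v\}$. By the second observation above, $v$ has at least one neighbour in each of the $\chi(G)-1\ge 3$ other classes; were every such class to contain at least two neighbours of $v$, we would get $k=d_G(v)\ge 2(\chi(G)-1)\ge 6$, contradicting $k\le 5$. Hence some class contains exactly one neighbour of $v$; relabelling it as $C_2$ gives $e(C_1,C_2)=1$, so $\es(G)=1$ by Proposition~\ref{prop:easy-char}. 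This step is exactly where the hypothesis $k\le 5$ is spent, and it pinpoints why the equivalence cannot survive for $k\ge 6$: there is then room for a singleton vertex to meet every other class at least twice.

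For the necessity direction, suppose $\es(G)=1$, so that $c^*(G)=1$. If $\chi(G)\ge 4$ we land in the third case immediately. If $\chi(G)=2$, then deleting one edge must make $G$ edgeless, which forces $G=K_2$ since $G$ is connected. The essential case is $\chi(G)=3$, where I want to conclude that $G$ is an odd cycle. Here $\es(G)=1$ yields an edge $e=uv$ with $G-e$ bipartite; fix a bipartition $(A,B)$ of $G-e$. Since $\chi(G)=3$, every $2$-coloring of $G-e$ must assign $u$ and $v$ the same color (otherwise it would properly $2$-color $G$), so $u,v$ lie in the same part, say $A$. Now I double-count the edges of the bipartite graph $G-e$: summing degrees over $A$ gives $k|A|-2$ (both endpoints of the deleted edge lie in $A$), while summing over $B$ gives $k|B|$. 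Equating, $k(|A|-|B|)=2$, so $k$ divides $2$. As $k=1$ forces $G=K_2$ with $\chi(G)=2\ne 3$, we get $k=2$; a connected $2$-regular graph is a cycle, and $\chi(G)=3$ makes it odd.

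The delicate points worth flagging are the two counting arguments. In the sufficiency step the inequality $2(\chi(G)-1)\ge 6$, valid precisely because $\chi(G)\ge 4$, is what lets $k\le 5$ force a class meeting $v$ exactly once; any weakening breaks the claim, and this is the genuine obstacle that confines the theorem to $k\le 5$. In the necessity step for $\chi(G)=3$ the main subtlety is the placement of $u,v$ in a common part: this uses $\chi(G)=3$ rather than connectivity of $G-e$, so the double-counting identity $k(|A|-|B|)=2$ remains valid even when $G-e$ is disconnected, and the whole $\chi(G)=3$ analysis goes through uniformly with no appeal to $k\le 5$.
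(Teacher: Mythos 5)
The paper does not prove this statement at all: it is quoted verbatim from \cite[Theorem~4.4]{AKMN2020}, so there is no internal proof to compare against. Your argument, however, is correct and complete, and it is worth recording why. The sufficiency for $K_2$ and odd cycles is immediate; for $\chi(G)>3$ with $c^{\star}(G)=1$ your pigeonhole count is sound: a singleton class $\{v\}$ must send at least one edge to each of the $\chi(G)-1\ge 3$ other classes (else $v$ could be recolored), and since $d_G(v)=k\le 5<2(\chi(G)-1)$ some class receives exactly one such edge, which is precisely condition (iii) of Proposition~\ref{prop:easy-char}. This is exactly the step that fails at $k=6$, consistent with the graph $X$ of Proposition~\ref{prop1}, where $v=w$ meets each of the three other classes twice. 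For necessity, the cases $\chi(G)=2$ and $\chi(G)\ge 4$ are as easy as you say, and your handling of $\chi(G)=3$ is the one genuinely nontrivial point: from $\chi(G-e)=2$ and the fact that every bipartition of $G-e$ must place the endpoints $u,v$ of $e$ in a common part (otherwise $G$ itself would be bipartite), the degree-sum identity $k|A|-2=k|B|$ forces $k\mid 2$, hence $k=2$ and $G$ is an odd cycle. This divisibility argument is valid whether or not $G-e$ is connected, and, as you observe, it needs no bound on $k$; the hypothesis $k\le 5$ is consumed entirely in the sufficiency step. The only cosmetic omission is the degenerate case $\chi(G)=1$, which is vacuous since then $\es(G)=0$.
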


The second part of~\cite[Problem 5.3]{AKMN2020} says: ``In particular, for the regular case extend the classification of Theorem~\ref{thm:k-regular} to $k>5$." We do not solve the problem, but demonstrate in the rest of the section that (i) the problem appears difficult and (ii) why $k=5$ is the threshold for regular graphs. Let $X$ be the graph as drawn in Fig.~\ref{fig1}.

\begin{figure}[ht!]
\begin{center}
\scalebox{0.3}[0.3]{\includegraphics{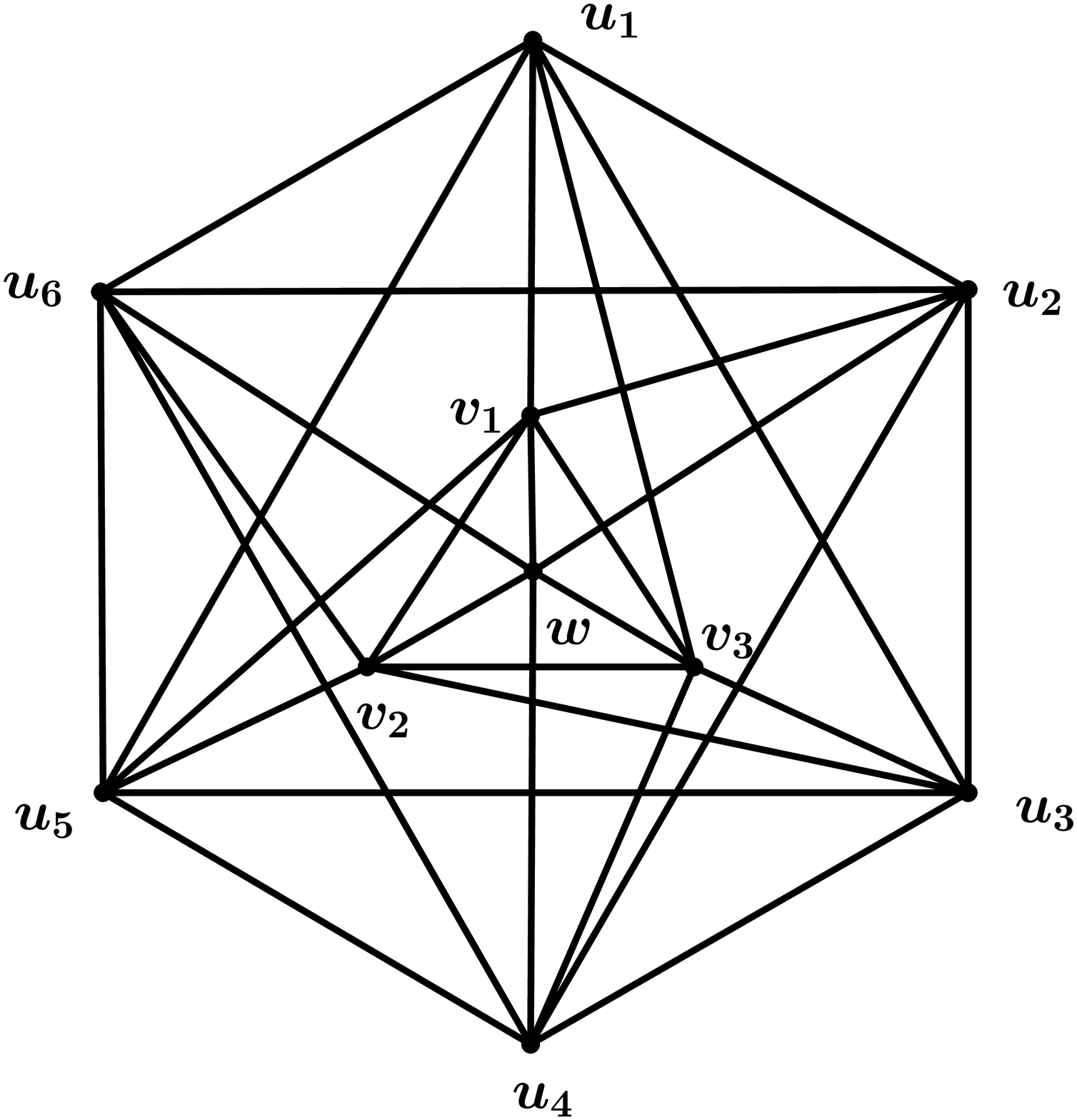}}
\caption{Graph $X$.}\label{fig1}
\end{center}
\end{figure}

Then we have:

\begin{prop}\label{prop1}
The graph $X$ is a $6$-regular graph with $\chi(X)=4$, $c^{\star}(X) = 1$, and $\es(X) = 2$.
\end{prop}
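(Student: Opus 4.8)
The plan is to treat the four claims in increasing order of difficulty, reserving the bulk of the work for the bound $\es(X)\ge 2$. The $6$-regularity is read off directly from Figure~\ref{fig1} by checking that every vertex has exactly six incident edges. For $\chi(X)=4$ I would first produce an explicit proper $4$-coloring of $X$ (giving the upper bound $\chi(X)\le 4$), chosen so that one color class is a single vertex; since a $\chi$-coloring of a graph with an edge has no empty class, this simultaneously gives $c^{\star}(X)\le 1$, and hence $c^{\star}(X)=1$. For the matching lower bound I would exhibit a $K_4$ inside $X$, so that $\chi(X)\ge\omega(X)\ge 4$; combined with the explicit coloring this yields $\chi(X)=4$.

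For $\es(X)\le 2$ it suffices, by definition, to name two edges $e_1,e_2$ and to display a proper $3$-coloring of $X-e_1-e_2$. I would locate two edges whose deletion destroys every obstruction to $3$-colorability, write down the resulting classes $(C_1,C_2,C_3)$ explicitly, and verify properness against the adjacencies read from Figure~\ref{fig1}.

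The crux is $\es(X)\ge 2$, i.e.\ that deleting any single edge leaves the chromatic number equal to $4$. The cleanest route, if the clique structure of $X$ permits, is to show that every edge of $X$ is avoided by at least one copy of $K_4$: then $\chi(X-e)\ge\omega(X-e)\ge 4$ for every $e\in E(X)$, so no single deletion can lower $\chi$, giving $\es(X)\ge 2$. Should some edge fail to lie outside a $K_4$, I would instead argue through Proposition~\ref{prop:easy-char}: if $\es(X)=1$, then $X$ admits a $4$-coloring with a singleton class $C_1=\{v\}$ and $e(C_1,C_2)=1$. Since $d_X(v)=6$ and the six neighbors of $v$ are distributed among $C_2,C_3,C_4$, the requirement $e(C_1,C_2)=1$ means that exactly one neighbor of $v$ lies in $C_2$. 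I would then show, to the contrary, that every proper $4$-coloring in which $\{v\}$ is a color class forces a balanced $2$-$2$-$2$ distribution of $N_X(v)$ among $C_2,C_3,C_4$, contradicting $e(C_1,C_2)=1$.

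The main obstacle is precisely this last step: ruling out, over all $4$-colorings of $X$ having a singleton class, the existence of a color class joined to that singleton by a single edge. I expect to control it by exploiting the symmetry of $X$ to reduce to a few orbit representatives for the singleton vertex $v$, and then to propagate the coloring constraints outward from the neighborhood of $v$ until the $2$-$2$-$2$ split is forced. If the $K_4$-covering argument of the previous paragraph applies to every edge, this case analysis is avoided entirely, which is why I would test the $K_4$-covering condition first.
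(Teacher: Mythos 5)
Your overall architecture matches the paper's: explicit $4$-coloring with a singleton class for $\chi(X)\le 4$ and $c^{\star}(X)=1$, a $K_4$ for $\chi(X)\ge 4$, two named edges for $\es(X)\le 2$, and Proposition~\ref{prop:easy-char} for $\es(X)\ge 2$. Two remarks on where your plan diverges or is at risk. First, your preferred route for the lower bound --- covering every edge by a $K_4$ it avoids --- is almost certainly a dead end here: the paper's argument pivots on the single clique $\{w,v_1,v_2,v_3\}$, and the edges inside that $K_4$ have no second $K_4$ to fall back on, so $\omega(X-e)$ drops to $3$ for those edges and the clique bound says nothing. You correctly anticipate this and fall back to the coloring argument, which is exactly what the paper does. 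Second, for that fallback you propose to reduce the location of the singleton vertex ``by symmetry to a few orbit representatives''; the paper uses a sharper and cheaper observation that you should adopt: in any $4$-coloring all four colors appear on the $K_4$ $\{w,v_1,v_2,v_3\}$, so the singleton color, being used only once, must sit on one of these four vertices, and symmetry then leaves only the two cases $w$ and $v_1$. Relying on the automorphism group alone would force you to also rule out a singleton at some $u_i$, which the $K_4$ observation disposes of for free. Finally, your claim that every $4$-coloring with singleton $\{v\}$ forces a $2$--$2$--$2$ split of $N(v)$ is the right target but is not the only contradiction the paper needs: in the subcase $c(u_5)=c(v_3)$ the coloring is killed not by the split but because a second vertex ($u_6$) is forced into the singleton class. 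Your proposal names the decisive step but does not carry it out, so as written it is a sound plan rather than a proof; the propagation of constraints from $N(v_1)$ is where all the actual work lies.
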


\begin{proof}
Since $\omega(X)=4$,  $\chi(X)\geq 4$. We give a 4-coloring $c$ of $X$ as follows: $c(w)=4$, $c(v_1)=c(u_3)=c(u_6)=1$, $c(v_3)=c(u_2)=c(u_5)=2$, $c(v_2)=c(u_1)=c(u_4)=3$. Since color 4 is used exactly once, $\chi(X)=4$ and $c^*(X)=1$. It remains to prove that $\es(X) = 2$.

Let $X'$ be the graph obtained from $X$ by deleting the edges $wv_1, wu_6$. Then we can get a  3-coloring $c'$ of $X'$ as follows: $c'(w) = c'(v_1) = c'(u_3) = c'(u_6) = 1$, $c'(v_3) = c'(u_2) = c'(u_5) = 2$, and $c'(v_2) = c'(u_1) = c'(u_4) = 3$. Hence $\es(X) \le 2$.

Suppose now on the contrary that $\es(X) = 1$. Then by~Proposition~\ref{prop:easy-char}(iii),  there exists a coloring  $c = (C_1, C_2, C_3, C_4)$, such that $|C_1| = 1$ and  $e(C_1, C_2)=1$. Since $X[\{v_1,v_2,v_3,w\}]\cong K_4$, we have $c(w)=1$ or $c(v_i)=1$ for some $i\in [3]$. If $c(w)=1$, then $\chi(X[N(w)])=3 $ and color 2 appears only once in $N(w)$. But this is impossible because $X[v_1,v_2,v_3]\cong K_3$ and $X[u_2,u_4,u_6]\cong K_3$. If $c(w)\ne 1$, then by symmetry we may without loss of generality assume that $c(v_1)=1$. Then we consider the coloring of $N(v_1)$. If $c(u_5)\neq c(v_3)$, say $c(u_5)=a\in\{2,3,4\}$ and $c(v_3)=b\in \{2,3,4\}\less\{a\}$, then $c(v_2)=c(u_1)=c=\{2,3,4\}\less\{a,b\}$, $c(w)=a$, and $c(u_2)=b$, contradicting  the fact that $e(C_1, C_2)=1$. If $c(u_5)=c(v_3)$, say $c(u_5)=c(v_3)=a\in\{2,3,4\}$, then $c(v_2)=b\in \{2,3,4\}\less\{a\}$, and $c(w)=c=\{2,3,4\}\less\{a,b\}$. Since $\{w,v_2,u_5\}\subseteq N(u_6)$, we have $c(u_6)=1$, a contradiction with the fact that $|C_1| = 1$. So $\es(G)\geq 2$ and we are done.
\end{proof}

Proposition~\ref{prop1} shows that Theorem~\ref{thm:k-regular} does not extend to $6$-regular graphs. On the other hand, consider the following example to see that there exist $4$-chromatic, $6$-regular (and of higher regularity) graphs with $\es(G)=1$.  A graph $G=C(n; a_0,a_1, \ldots, a_k)$ is called a {\it circulant} if $V(G)=[n]$ and $E(G)=\{(i,j): |i-j|\in\{a_0, a_1, \ldots, a_k\} \pmod n\}$, where $1\leq a_0<a_1<\cdots <a_k\leq n/2$.
If $a_k<n/2$, then $G$ is a $(2k+2)$-regular graph; otherwise, $G$ is $(2k + 1)$-regular. In \cite[Theorem 2.1]{DMP2004}, Dobrynin, Melnikov, and Pyatkin constructed 4-critical $r$-regular circulants for $r\in\{6,8,10\}$. (Recall that a graph $G$ with $\chi(G)=k$ is called {\it edge-critical} (or simply {\it $k$-critical}) if its chromatic number is strictly less than $k$ after removing any edge.) Hence these regular graphs satisfy $\es(G)=1$.

%%%%%%%%%%%%%%%%%%%%%%%%%%%
\section{On Problem \ref{Oprob5.2}}
\label{sec:second-open}
%%%%%%%%%%%%%%%%%%%%%%%%%%%

Let $G$ be a graph with $\es(G)=1$ and $\chi(G)=r$. We say that a $\chi$-coloring of $G$ is {\it a good coloring} if it satisfies the conditions of Proposition~\ref{prop:easy-char}(iii). Let $\mathcal{C}(G)$ be the set of good colorings of $G$. If $c = (C^c_1, \ldots, C^c_r) \in \mathcal{C}(G)$, then we may always without loss of generality assume that $|C^c_1|=1$ and $e(C^c_1, C^c_2)=1$.

Clearly, $\es(G)+\es(\overline{G}) = 2$ holds if and only if $\es(G) = \es(\overline{G}) = 1$. We first characterize disconnected graphs $G$ for which $\es(G)+\es(\overline{G}) = 2$ holds.

\begin{prop}
Let $G$ be a graph with components $G_1, \ldots,G_s$, $s\ge 2$, and let $\mathcal{G}=\{G_i:\  \chi(G_i)=\chi(G), i\in[s]\}$. Then $\es(G)+\es(\overline{G})=2$ if and only if
\begin{description}
\item[(i)] $|\mathcal{G}|=1$ and $\es(G_i)=1$ for $G_i\in \mathcal{G}$, and
\item[(ii)] there exists a $G_j$ such that $\es(\overline{G_j})=1$, or there exist components $G_j$ and $G_k$, $j\neq k$, such that $c^*(\overline{G_j})=1$ and $c^*(\overline{G_k})=1$.
\end{description}
\end{prop}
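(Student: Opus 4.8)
The plan is to use the observation recorded just before the statement, namely that $\es(G)+\es(\overline G)=2$ holds exactly when $\es(G)=\es(\overline G)=1$. It therefore suffices to prove two separate equivalences: that $\es(G)=1$ is equivalent to (i), and that $\es(\overline G)=1$ is equivalent to (ii).

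For the first equivalence I would establish the additive formula
\[
\es(G)=\sum_{G_i\in\mathcal G}\es(G_i).
\]
Since $G$ is a disjoint union, removing a set $F$ of edges affects each component independently, and $\chi(G-F)=\max_i\chi(G_i-F_i)$, where $F_i=F\cap E(G_i)$. As $\chi(G)=\max_i\chi(G_i)$ is attained precisely on the members of $\mathcal G$, lowering $\chi(G)$ forces $\chi(G_i-F_i)<\chi(G_i)$ for every $G_i\in\mathcal G$ (the remaining components already have smaller chromatic number and need not be touched). Minimising over $F$ componentwise yields the displayed formula. When $\es(G)=1$ we must have $\chi(G)\ge 2$, so every $G_i\in\mathcal G$ has an edge and hence $\es(G_i)\ge 1$; the formula then forces $|\mathcal G|=1$ together with $\es(G_i)=1$, and conversely this gives $\es(G)=1$. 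This is exactly (i).

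For the second equivalence, the key structural fact is that, since $G$ is disconnected, $\overline G$ is the join of $\overline{G_1},\dots,\overline{G_s}$: every vertex of $\overline{G_i}$ is adjacent in $\overline G$ to every vertex of $\overline{G_j}$ whenever $j\ne i$. Consequently $\chi(\overline G)=\sum_i\chi(\overline{G_i})\ge 2$, every color class of a proper coloring of $\overline G$ lies inside a single part $\overline{G_i}$, and every $\chi(\overline G)$-coloring restricts to a $\chi(\overline{G_i})$-coloring on each part. I would then apply Proposition~\ref{prop:easy-char}(iii) to $\overline G$: here $\es(\overline G)=1$ iff $\overline G$ has an optimal coloring with a singleton class $C_1=\{v\}$, say with $v\in\overline{G_i}$, and a class $C_2$ with $e(C_1,C_2)=1$. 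Writing $C_2\subseteq\overline{G_j}$, there are two cases. If $j=i$, the single edge lies inside $\overline{G_i}$ and the restricted coloring is a good coloring of $\overline{G_i}$, so $\es(\overline{G_i})=1$. If $j\ne i$, then $v$ is joined to all of $C_2$, so $e(C_1,C_2)=|C_2|=1$ forces $C_2=\{u\}$ to be a singleton, whence $c^*(\overline{G_i})=c^*(\overline{G_j})=1$. Conversely, a good coloring of some $\overline{G_i}$ extends—by choosing arbitrary optimal colorings with disjoint color sets on the remaining parts—to a good coloring of $\overline G$, and singleton classes in two distinct parts $\overline{G_j},\overline{G_k}$ are joined by exactly one edge and hence likewise yield a good coloring of $\overline G$. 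This is precisely (ii).

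The routine ingredients are the two standard join facts (additivity of $\chi$ and confinement of color classes to parts) and the componentwise minimisation giving the formula for $\es(G)$. The step that requires the most care is the case analysis for $\es(\overline G)=1$: one must verify that the dichotomy according to whether the singleton's unique neighbouring class lies in its own part or in a different part is exhaustive, and that in the cross-part case the join structure forces that class to be a singleton as well. This dichotomy is exactly what produces the two alternatives recorded in condition (ii).
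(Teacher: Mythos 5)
Your proposal is correct and follows essentially the same route as the paper: both reduce to $\es(G)=\es(\overline G)=1$, handle $G$ by observing that lowering $\chi(G)$ requires lowering the chromatic number of every component in $\mathcal G$, and handle $\overline G$ via the join structure, which confines color classes to single parts and yields the dichotomy (critical edge inside one part, or between two parts forcing two singleton classes). The only cosmetic difference is that you phrase the $\overline G$ analysis through the good-coloring characterization of Proposition~\ref{prop:easy-char}(iii) while the paper works directly with an edge $\overline e$ whose removal drops $\chi(\overline G)$; these are equivalent by that same proposition.
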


\begin{proof}
The following fact is essential for the rest of the argument: if $c$ is a proper coloring of $\overline{G}$, then $c(V(G_i))\cap c(V(G_j)) = \emptyset$ for every $i, j\in [s]$, $i\ne j$. If $G$ satisfies (i) and (ii), then (i) yields $\es(G)=1$, while (ii) gives $\es(\overline{G})=1$. Conversely, suppose that $\es(G)+\es(\overline{G})=2$. Then $\es(G)=1$ and $\es(\overline{G})=1$. If  $|\mathcal{G}|\geq2$ or $\es(G_i)\geq 2$ for any $G_i\in \mathcal{G}$, then $\chi(G - e)=\chi(G)$ for any $e\in E(G)$, a contradiction. This means that (i) holds.  Since $\es(\overline{G})=1$, there exists an a edge $\overline{e}\in E(\overline{G})$ such that $\chi(\overline{G} - \overline{e})<\chi(\overline{G})$. We consider two cases for the edge $\overline{e}$. If $\overline{e}\in E(\overline{G_j})$ for some $j\in [s]$, then $\es(\overline{G_j})=1$. In the other case the two endpoints of $\overline{e}$ lie in different components, say in $G_j$ and in  $G_k$, $j\ne k$. But then $c^*(\overline{G_j})=1$ and $c^*(\overline{G_k})=1$. Thus (ii) holds as well.
\end{proof}

In the main result of this section we now characterize connected graphs $G$ with $\chi(G)=3$ for which $\es(G)+\es(\overline{G})=2$ holds.

\begin{thm}\label{NG3}
Let $G$ be a connected graph of order $n$, with $\chi(G)=3$. Then $\es(G)+\es(\overline{G})=2$  if and only if
\begin{description}
  \item[(i)] all odd cycles in $G$ share one edge,
  \item[(ii)] $c^*(\overline{G})=1$,
  \item[(iii)] $\chi(\overline{G})\geq\lceil\frac{n}{2}\rceil$,
  \item[(iv)] if $n$ is even, $\chi(\overline{G})=\frac{n}{2}$, and $||C^c_2|-|C^c_3||=1$ for each $c = (C_1^c, C_2^c, C_3^c)\in \mathcal{C}(G)$, then $g(G)=3$, and for any proper coloring of $\overline{G}$, if $\{x_1,x_2,x_3\}$ is a color class, then $d_{G}(v)\geq2$ for each  $v\in N_G(\{x_1,x_2,x_3\})$.
\end{description}
\end{thm}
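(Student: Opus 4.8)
The plan is to exploit the trivial reduction $\es(G)+\es(\overline{G})=2 \iff \es(G)=\es(\overline{G})=1$ and to treat the two factors separately: the factor $\es(G)=1$ is easy and yields condition~(i) together with a rigid normal form for $G$, while the factor $\es(\overline{G})=1$ carries all the difficulty and accounts for conditions (ii)--(iv). Since $\chi(G)=3$, deleting one edge $e$ can only lower $\chi$ to $2$, i.e.\ it must render $G$ bipartite, and this happens exactly when $e$ lies on every odd cycle of $G$; so $\es(G)=1$ is equivalent to condition~(i). Writing $e=ab$, I would then observe that $ab$ is not a bridge (else $a,b$ could be $2$-coloured apart, contradicting $\chi(G)=3$), so $G-ab$ is connected and bipartite with parts $A\sqcup B$ and $a,b\in A$. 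Thus $G$ is a bipartite graph plus the single edge $ab$ inside one part; in particular $\omega(G)\le 3$ and every triangle of $G$ is of the form $\{a,b,w\}$ for a common neighbour $w$ of $a,b$. I will carry this normal form into the analysis of $\overline{G}$.

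The decisive translation is that a proper colouring of $\overline{G}$ is exactly a partition of $V(G)$ into cliques of $G$, so $\chi(\overline{G})$ equals the clique-cover number of $G$ and a singleton colour class of $\overline{G}$ is a singleton clique. By the normal form every clique has at most three vertices and at most one triangle can appear in a partition (all triangles share the edge $ab$). Counting vertices in a minimum partition with $t$ triangles, $d$ edges and $s$ singletons gives $3t+2d+s=n$ and $t+d+s=\chi(\overline{G})$, hence the identity $s=2\chi(\overline{G})-n+t$ with $t\in\{0,1\}$. Applying Proposition~\ref{prop:easy-char}(iii) to $\overline{G}$, I would show that $\es(\overline{G})=1$ is equivalent to the existence of a minimum clique partition of $G$ possessing \emph{either} two singleton cliques \emph{or} a singleton clique $\{x\}$ and an edge-clique $\{y,z\}$ with $x$ adjacent in $G$ to exactly one of $y,z$. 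Because the only possible triangle is $\{a,b,w\}$, a singleton can never be $G$-adjacent to both endpoints of an edge-clique, so the second alternative simplifies to: the singleton has a $G$-neighbour lying in some edge-clique.

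With this in hand the easy part of the second factor falls out. Condition~(ii) just says a singleton clique exists in some minimum partition, which is necessary; and from $s=2\chi(\overline{G})-n+t\le 2\chi(\overline{G})-n+1$ the mere existence of a singleton forces $\chi(\overline{G})\ge\lceil n/2\rceil$, which is condition~(iii). Next I would dispatch the generic situations: if the maximum of $s$ over minimum partitions is at least $2$ we get two singletons and hence $\es(\overline{G})=1$; and if $G$ has no triangle (so $t=0$), then any single exposed vertex is automatically adjacent to exactly one endpoint of its partner's edge-clique, since adjacency to both would create a forbidden triangle. Feeding these back into the identity $s=2\chi(\overline{G})-n+t$, the only case left unresolved is $n$ even with $\chi(\overline{G})=n/2$, where $s=t$, so a singleton exists only by using the triangle $\{a,b,w\}$ and is then unique.

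This residual regime is precisely the domain of condition~(iv), and it is where I expect the real work to lie. Here a minimum partition must consist of the triangle $\{a,b,w\}$, one singleton $\{x\}$, and a near-perfect matching of the remaining $n-3$ vertices leaving $x$ exposed; by the reduction above, $\es(\overline{G})=1$ holds iff the exposed vertex can be chosen with $N_G(x)\not\subseteq\{a,b,w\}$. Necessity of the degree hypothesis is clean: a neighbour $v$ of the triangle with $d_G(v)=1$ is forced to be the (unabsorbable) singleton, giving $\es(\overline{G})\ge 2$, so $\es(\overline{G})=1$ forces $g(G)=3$ and $d_G(v)\ge 2$ for all $v\in N_G(\{a,b,w\})$. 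For sufficiency I would run an augmenting-path/rerouting argument on $G[V\setminus\{a,b,w\}]$ showing that, under the degree condition, a maximum matching of the remainder can always be rerouted so that its exposed vertex has a neighbour outside the triangle, producing an absorbable singleton. The balance hypothesis $||C_2^c|-|C_3^c||=1$ for every $c\in\mathcal{C}(G)$ enters through the duality between good $3$-colourings of $G$ (triples of independent sets) and $3$-clique covers of $\overline{G}$: it certifies that we are genuinely pinned at $\chi(\overline{G})=n/2$ with no slack, so that absorbability is governed purely by the local girth/degree data rather than by a cheaper global partition. The main obstacle is exactly this boundary equivalence --- verifying that the matching reroute always succeeds under the degree condition, and that the balance condition correctly isolates the tight case --- since everything outside it reduces to the counting identity and the two easy alternatives.
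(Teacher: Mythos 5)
Your skeleton matches the paper's: reduce to $\es(G)=\es(\overline{G})=1$, get (i) from the first factor, and read colourings of $\overline{G}$ as clique partitions of $G$ (your counting identity $s=2\chi(\overline{G})-n+t$ with $t\le 1$ is exactly the argument behind (iii), since all triangles of $G$ share the edge $ab$ and so at most one can occur in a partition). The difficulty is concentrated where the paper's also is --- condition (iv) --- and there your argument has genuine gaps, not just missing details.

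First, your reduction of Proposition~\ref{prop:easy-char}(iii) to ``two singletons, or a singleton $G$-adjacent to exactly one endpoint of an edge-clique'' drops the case where the absorbing class is the triangle: if the singleton $x$ has $N_G(x)=\{a,b\}$ and the triangle class is $\{a,b,w\}$, then $e_{\overline{G}}(\{x\},\{a,b,w\})=1$ and $\es(\overline{G})=1$, yet $N_G(x)\subseteq\{a,b,w\}$. So your criterion ``$\es(\overline{G})=1$ iff the exposed vertex can be chosen with $N_G(x)\not\subseteq\{a,b,w\}$'' is false in one direction, and since (iv) only requires $d_G(v)\ge 2$ (which this $x$ satisfies), the case cannot be discarded. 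Second, the necessity of the degree condition is asserted rather than proved: you say a degree-one neighbour $v$ of the triangle ``is forced to be the unabsorbable singleton,'' but nothing forces the witnessing minimum partition to leave $v$ exposed --- $v$ could be paired with its unique neighbour, the triangle of the witness could be a different $\{a,b,w'\}$, and its singleton could be another vertex entirely. Ruling all of this out is exactly where the paper uses the balance hypothesis $||C^c_2|-|C^c_3||=1$: it forces every good colouring $\bar c$ of $\overline{G}$ to put the singleton $x_1$ of a good colouring of $G$ into a three-vertex class meeting both $C^c_2$ and $C^c_3$, and only then does a case split on $|C^c_2|-|C^c_3|=\pm 1$ produce the contradiction. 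Your sketch names this hypothesis but never actually uses it. Third, the sufficiency in the tight even case is deferred to an unspecified ``augmenting-path/rerouting'' argument; the paper needs no such machinery --- it starts from the three-vertex class guaranteed by (ii) and uses connectivity together with $d_G(v)\ge 2$ to exhibit the one-edge pair directly. Until these three points are repaired, the proposal does not establish the theorem.
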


\begin{proof}
{\it Necessity:} Since $\es(G)=1$ and $\chi(G)=3$, there is an edge $e\in E(G)$ such that $G - e$ has no odd cycles. So (i) holds. It was observed in~\cite[Lemma 4.3]{AKMN2020} that  $\es(G)=1$ implies $c^*(G)=1$, hence (ii) holds. Let $c = (C^c_1,C^c_2, C^c_3)\in \mathcal{C}(G)$. We have $\omega(\overline{G})\geq\lfloor\frac{n}{2}\rfloor$ since $|C^c_1|=1$. So, $\chi(\overline{G})\geq \omega(\overline{G})\geq\lceil\frac{n}{2}\rceil$ when $n$ is even. In the case of $n$ is odd and $\omega(\overline{G})=\frac{n-1}{2}$, we have $|C^c_2|=|C^c_3|=\frac{n-1}{2}$ and $\overline{G}[C^c_2]\cong K_{C^c_2}$, $\overline{G}[C^c_3]\cong K_{C^c_3}$. Note that for any proper coloring of $\overline{G}$, there is at most one color class with 3 vertices, and the number of vertices in other color classes must be smaller than $3$. By Proposition~\ref{prop:easy-char}(iii),  there exists a $\chi$-coloring of $\overline{G}$ such that some color class has exactly one vertex. Then $\chi(\overline{G})\geq \frac{n+1}{2}=\lceil\frac{n}{2}\rceil$.

Suppose now that $n$ is even, $\chi(\overline{G})=\frac{n}{2}$, and $||C^c_2|-|C^c_3||=1$ for any $c = (C_1^c, C_2^c, C_3^c)\in \mathcal{C}(G)$. Let $C^c_1=\{x_1\}$, and let $x_2$ be the vertex of $C^c_2$ such that  $x_1x_2\in E(G)$.
Let $\bar{c}\in \mathcal{C}(\overline{G})$ and let the color set used by $\bar{c}$ be $[\frac{n}{2}]$. We claim that $\bar{c}(x_1)=\bar{c}(x_2)$ and $\bar{c}(x_1)\in \bar{c}(C^c_3)$. Notice that $x_1$  is in $\overline{G}$ adjacent to all vertices of $C^c_2$ except $x_2$. If $|C^c_2|-|C^c_3|=1$, then $|C^c_2|=\frac{n}{2}$. Then the claim holds because $\chi(\overline{G})=\frac{n}{2} = |\bar{c}(C^c_2)|$. Suppose second that $|C^c_3|-|C^c_2|=1$. Then $|C^c_3|=\frac{n}{2}$ and $|C^c_2|=\frac{n-2}{2}$. We have $|\bar{c}(C^c_3)| = \frac{n}{2}$. If $\bar{c}(x_1)\neq\bar{c}(x_2)$, then $\bar{c}(x_1\cup C^c_2)=[\frac{n}{2}]$,  contradicting the fact that $\bar{c}\in \mathcal{C}(\overline{G})$ because there is no singleton color class. Hence $\bar{c}(x_1)=\bar{c}(x_2)$ and $\bar{c}(x_1)\in \bar{c}(C^c_3)$ since $\bar{c}(C^c_3)=[\frac{n}{2}]$. Thus
$g(G)=3$. We might as well set $x_3\in C^c_3$ and $\bar{c}(x_1)=\bar{c}(x_2)=\bar{c}(x_3)=1$ in the following.
Suppose there is a vertex $v\in N_G(\{x_1,x_2,x_3\})$ such that $d_G(v)=1$.  If $|C^c_2|-|C^c_3|=1$,  then we have $C^c_2\subseteq N_{\overline{G}}(v)$ when $v\in N_G(x_1)$, $(C^c_2\less \{x_2\})\cup x_3\subseteq N_{\overline{G}}(v)$ when $v\in N_G(x_2)$ and $V(G)\less \{x_3\}= N_{\overline{G}}(v)$ when $v\in N_G(x_3)$.
Thus $\chi(\overline{G})>\frac{n}{2}$ when $v\in N_G(x_1)\cup N_G(x_2)$, a contradiction. When $v\in N_G(x_3)$,  we may without loss of generality assume that $\bar{c}(C^c_3)=[\frac{n-2}{2}]$.  Then $\bar{c}(v)=\frac{n}{2}$ since $x_2\in N_{\overline{G}}(v)$. But every color in $[\frac{n-2}{2}]$ appears exactly twice in $N_{\overline{G}}(v)$,  contradicting the fact that $\bar{c}\in \mathcal{C}(\overline{G})$.
If $|C^c_3|-|C^c_2|=1$, then we have $\chi(\overline{G})>\frac{n}{2}$ when $v\in N_G(x_3)$ and  $\bar{c}\notin \mathcal{C}(\overline{G})$  when $v\in N_G(x_1)\cup N_G(x_2)$ by the same analysis above, a contradiction.
\medskip

{\it Sufficiency:} Suppose an edge $e$ is shared by all odd cycles of $G$.  Then $\chi(G -e)\leq 2$. Hence $\es(G)=1$ holds by definition.
Suppose $\chi(\overline{G})\geq\lceil\frac{n}{2}\rceil$.  In~\cite[Lemma 4.2]{AKMN2020} it was proved that if $\chi(\overline{G})\geq\frac{n+2}{2}$, then $\es(\overline{G})=1$. So we may assume $\chi(\overline{G})=\lceil\frac{n}{2}\rceil$ in the following.

Suppose first that $n$ is odd. Let $\bar{c}$ be a proper coloring of $\overline{G}$. Since $\chi(\overline{G})=\lceil\frac{n}{2}\rceil=\frac{n+1}{2}$, the complement $\overline{G}$ has a singleton color class under $\bar{c}$. If $\overline{G}$ has two singleton color classes under $\bar{c}$, then $\rho(\overline{G})=1$. Otherwise, other color classes have exactly two vertices. At this time,  since $G$ is connected, $\Delta(\overline{G})<n-1$, thus $\rho(\overline{G})=1$.
Suppose second that $n$ is even. We have $||C^c_2|-|C^c_3||=1$ for any $c_i\in \mathcal{C}(G)$ since $\chi(\overline{G})=\frac{n}{2}$. Since
$c^*(\overline{G})=1$, there is a proper coloring such that some color class contains three vertices. Let $\bar{c}$ be  the proper coloring and $\bar{c}(x_1)=\bar{c}(x_2)=\bar{c}(x_3)$, where $x_s\in C^c_s $ for $s\in[3]$.
Let $\{\alpha,\beta\}=\{2,3\}$.
If $|C^c_\alpha|-|C^c_\beta|=1$, then $|C^c_\alpha|=\frac{n}{2}$ and $|C^c_\beta|=\frac{n-2}{2}$. Since $\chi(\overline{G})=\frac{n}{2}$, we may assume $\bar{c}(C^c_\alpha)=[\frac{n}{2}]$ and $\frac{n}{2}\notin C^c_\beta$, say $\bar{c}(u)=\frac{n}{2}$. Since $G$ is connected and $d_G(v)\geq2$ for any $v\in N_G(\{x_1,x_2,x_3\})$, we have $N_{C^c_\beta}(u)\less \{x_\beta\}\neq \emptyset$ or $\{x_1,x_\beta\}\subseteq N_G(u)$.
Thus $\rho(\overline{G})=1$, and by Proposition~\ref{prop:easy-char} we conclude that $\es(\overline{G})=1$.
\end{proof}

%%%%%%%%%%%%%%%%%%%%%%%%%%%
\section{On Problem \ref{Oprob5.1}}
\label{sec:third-open}
%%%%%%%%%%%%%%%%%%%%%%%%%%%

Obviously, when $r=2$, the upper bound in~\eqref{eq:upper} is attained if and only if the graph in question is a complete  bipartite graph in which the orders of its bipartition sets differ by at most one. For an arbitrary $r$ we have:

\begin{thm}\label{thm2.1}
Let $G$ be a graph of order $n$ and with $r=\chi(G)$.
\begin{description}
  \item[(i)] Suppose that $n\equiv r-1 \pmod r$ and  $\es(G)=\lfloor \frac{n}{r}\rfloor\lfloor \frac{n}{r}+1\rfloor$. Then for any $r$-coloring $(C_1,\ldots, C_r)$ of $G$, where $|C_1|\leq  \cdots \leq |C_r|$,  we have
  \begin{description}
  \item[(1)] $|C_1|=\lfloor\frac{n}{r}\rfloor$, and  $|C_2|=\cdots=|C_r|=\lfloor\frac{n}{r}+1\rfloor$.
  \item[(2)] If $2\leq i\leq r$, then  $G[C_1\cup C_i]$ is a complete bipartite graph with bipartition $(C_1, C_i)$.
  \item[(3)] If $v\in C_i$ and $j\in[r]\less\{i\}$, then $e(v, C_j)\geq\lfloor\frac{n}{r}\rfloor$.
\end{description}

\item[(ii)] Suppose that $n\not\equiv r-1 \pmod r$ and $\es(G)=\lfloor \frac{n}{r}\rfloor^2$. Then for any $r$-coloring $(C_1,\ldots, C_r)$ of $G$, where $|C_1|\leq  \cdots \leq |C_r|$,  we have
\begin{description}
  \item[(1)] $|C_1|=|C_2|=\lfloor\frac{n}{r}\rfloor$.
  \item[(2)] If $|C_i|=\lfloor\frac{n}{r}\rfloor$, and $v\in C_i$ and $j\in[r]\less\{i\}$, then $e(v, C_j)\geq\lfloor\frac{n}{r}\rfloor$. If $|C_i|>\lfloor\frac{n}{r}\rfloor$, then $\sum_{v_s\in C_i} \ell_s\geq {\lfloor\frac{n}{r}\rfloor}^2$, where $\ell_s=\min\{e(v_s, C_j):\ v_s\in C_i$, $j\in[r]\less\{i\}\}$.
\end{description}
\end{description}
\end{thm}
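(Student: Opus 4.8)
The plan is to reduce everything to two edge-deletion bounds, both consequences of the definition of $\es$ together with the relation $\es(G)\le\rho(G)$ noted in the introduction. Throughout write $q=\lfloor n/r\rfloor$ and let $B$ denote the right-hand side of~\eqref{eq:upper}, so the hypothesis reads $\es(G)=B$. The first tool is the \emph{merging bound}: for any $r$-coloring and any two classes $C_a,C_b$, deleting the $e(C_a,C_b)$ edges between them and recoloring $C_a\cup C_b$ with one colour yields a proper $(r-1)$-coloring of the resulting graph, so $\es(G)\le e(C_a,C_b)$. The second, and crucial, tool is the \emph{colour-elimination bound}: fix a class $C_i$, and for each $v\in C_i$ pick a colour $j(v)\in[r]\setminus\{i\}$ attaining $\ell_v=\min_{j\ne i}e(v,C_j)$. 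Recolour every $v\in C_i$ with $j(v)$ and delete the $\ell_v$ edges from $v$ to $C_{j(v)}$; since $C_i$ is independent the moved vertices are pairwise non-adjacent, and each has had exactly its edges into its target class removed, so the new colour classes stay independent and we obtain a proper $(r-1)$-coloring after deleting $\sum_{v\in C_i}\ell_v$ edges. Hence
\[
\es(G)\ \le\ \sum_{v\in C_i}\ell_v\qquad\text{for every }i\in[r].
\]
Combined with $\es(G)=B$, the merging bound gives $e(C_a,C_b)\ge B$ for all pairs, and the colour-elimination bound gives $\sum_{v\in C_i}\ell_v\ge B$ for every $i$; these two inequalities drive the whole argument.

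Next I would settle the size statements (i)(1) and (ii)(1). The only input is $n_an_b\ge e(C_a,C_b)\ge B$ for all pairs, with $n_a=|C_a|$ and $\sum_a n_a=n$. This is an elementary extremal question: among partitions of $n$ into $r$ positive parts, which have every pairwise product at least $B$? Ordering $n_1\le\cdots\le n_r$, it suffices to analyse the two smallest parts. A short calculation shows that $n_1\ge q+1$ already forces $\sum_a n_a>n$, while if $n_1<q$ the forced lower bound on $n_2$ (and hence on $n_3,\dots,n_r$) again pushes $\sum_a n_a$ past $n$; so $n_1=q$. The sum constraint then pins down the rest: all of $n_2,\dots,n_r$ equal $q+1$ in case (i) (where $n=(q+1)r-1$), and $n_2=q$ in case (ii) (where $n\le qr+r-2$). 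I expect this bookkeeping to be the most tedious—though not conceptually hard—part.

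The remaining conditions come from pushing the two bounds to equality. For (i)(2), condition (i)(1) gives $|C_1|=q$ and $|C_i|=q+1$, so $e(C_1,C_i)\ge B=q(q+1)=|C_1|\,|C_i|$ forces equality, i.e.\ $G[C_1\cup C_i]$ is complete bipartite. For (i)(3) with $i\ge2$, the colour-elimination bound gives $\sum_{v\in C_i}\ell_v\ge q(q+1)$ over $q+1$ summands; but each $\ell_v\le e(v,C_1)=|C_1|=q$ because $v$ is complete to $C_1$ by (i)(2), so $\sum_{v\in C_i}\ell_v\le q(q+1)$, and equality forces $\ell_v=q$, i.e.\ $e(v,C_j)\ge q$ for every $j\ne i$; the case $i=1$ is immediate from (i)(2) since then $e(v,C_j)=q+1$. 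The same mechanism yields (ii)(2): its second clause is literally the colour-elimination bound $\sum_{v\in C_i}\ell_v\ge q^2$, and for a class $C_i$ with $|C_i|=q$ one uses that $C_1$ or $C_2$ (whichever differs from $C_i$—both of size $q$ by (ii)(1)) gives $\ell_v\le q$, so $q$ summands of a sum that is at least $q^2$ must each equal $q$, whence $e(v,C_j)\ge q$ for all $j\ne i$.

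The conceptual crux, which I would isolate as a lemma, is the colour-elimination bound $\es(G)\le\sum_{v\in C_i}\ell_v$, since it is what converts the single number $\es(G)=B$ into the per-vertex degree conditions (i)(3) and (ii)(2); once it is available, every remaining step is either completeness-from-equality or the elementary size bookkeeping above. The one point requiring care in the lemma is verifying that after the simultaneous recolouring of all of $C_i$ the new colour classes remain independent, which holds precisely because $C_i$ is independent and each moved vertex has exactly its edges into its target class deleted.
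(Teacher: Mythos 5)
Your proposal is correct, and while it rests on the same two elementary deletion moves that the paper uses (merging two classes, and moving each vertex of a class to its cheapest other class), it organizes them differently in two places. For the class sizes (i)(1) and (ii)(1), the paper invokes the averaging argument from the proof of \cite[Theorem 2.1]{AKMN2020} to produce one pair of classes with small total size and works from there; you instead derive the sizes entirely from the pairwise inequality $|C_a|\,|C_b|\ge e(C_a,C_b)\ge \es(G)$ together with $\sum_a |C_a|=n$, which is self-contained and, as far as I can check, the bookkeeping closes in both cases. The more substantive difference is in the per-vertex degree conditions: for (i)(3) and the first clause of (ii)(2) the paper uses a split-and-merge deletion, removing $E(v,C_j)\cup E(C_i\setminus\{v\},C_k)$ for a smallest class $C_k$ and counting at most $(\lfloor n/r\rfloor-1)+\lfloor n/r\rfloor\cdot|C_k|$ edges, whereas you derive all of (i)(3), (ii)(2)-first-clause and (ii)(2)-second-clause from the single colour-elimination bound $\es(G)\le\sum_{v\in C_i}\ell_v$ (which the paper only deploys for the second clause of (ii)(2)), pairing it with the termwise upper bound $\ell_v\le\lfloor n/r\rfloor$ obtained from completeness to a class of size $\lfloor n/r\rfloor$ and forcing equality in every summand. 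Your route is more unified and arguably cleaner, at the mild cost of needing (i)(2), respectively (ii)(1), established first so that the termwise bound on $\ell_v$ is available; the paper's split-and-merge move reaches the same contradictions more directly without that dependency. Both arguments are valid, so this is a legitimate alternative proof rather than a reconstruction of the paper's.
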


\begin{proof}
\indent (i) Consider an $r$-coloring $(C_1,\ldots, C_r)$ of $G$, where $|C_1|\leq  \cdots \leq |C_r|$.  \\[0.8ex]
\indent (1) Since $n\equiv r-1 \pmod r$, we have $n = r\lfloor n/r\rfloor + r -1$. From here it was deduced in the proof of~\cite[Theorem 2.1]{AKMN2020} that there exists at least one pair of color class $C_i$ and $C_j$, $i<j$, such that $|C_i|+|C_j|\leq\lfloor\frac{n}{r}\rfloor+\lfloor\frac{n}{r}+1\rfloor$. Since $\es(G)=\lfloor \frac{n}{r}\rfloor\lfloor \frac{n}{r}+1\rfloor$, we have $|C_i|=\lfloor\frac{n}{r}\rfloor$ and $|C_j|=\lfloor\frac{n}{r}+1\rfloor$. Moveover, we have $i=1$ and $|C_k|\geq\lfloor\frac{n}{r}+1\rfloor$ for $2\leq k\leq r$, since otherwise $\es(G)\leq |C_1||C_2|\leq \lfloor \frac{n}{r}\rfloor^2$, a contradiction. Thus
$|C_2|=\cdots=|C_r|=\lfloor\frac{n}{r}+1\rfloor$ because $n = r\lfloor n/r\rfloor + r -1$. \\[0.8ex]
\indent (2, 3) Observe that $G[C_1\cup C_i]$ is a complete bipartite graph with bipartition $(C_1, C_i)$ for any $2\leq i\leq r$, since otherwise,
$\es(G)\leq |C_1||C_i|\leq \lfloor \frac{n}{r}\rfloor\lfloor \frac{n}{r}+1\rfloor-1$, a contradiction.
Therefore, we have $e(v, C_j)\geq\lfloor\frac{n}{r}\rfloor$ when $v\in C_1$ or $j=1$. If $e(v, C_j)<\lfloor\frac{n}{r}\rfloor$ for some $v\in C_i$  and $j\in[r]\less\{i\}$ ($i,j>1$), then by deleting the edge set $E(v, C_j)\cup E(C_i\less\{v\}, C_1)$, we get an $(r-1)$-coloring
with the color class set $\{C_1\cup(C_i\less\{v\}), C_2,\ldots,C_j\cup\{v\},\ldots, C_r\}\less \{C_i\}$. Notice that $|E(v, C_j)\cup E(C_i\less\{v\}, C_1)|<\lfloor \frac{n}{r}\rfloor\lfloor \frac{n}{r}+1\rfloor$. Thus $\es(G)< \lfloor \frac{n}{r}\rfloor\lfloor \frac{n}{r}+1\rfloor$, a contradiction. \\[0.8ex]
\indent (ii) Suppose $n\not\equiv r-1 \pmod r$  and $\es(G)=\lfloor \frac{n}{r}\rfloor^2$. Consider an $r$-coloring $(C_1,\ldots, C_r)$ of $G$, where $|C_1|\leq |C_2|\leq \cdots \leq |C_r|$.\\[0.8ex]
\indent (1) By the proof of~\cite[Theorem 2.1]{AKMN2020},  there exists at least one pair of color class $C_i$ and $C_j$ ($i\leq j$) in which
$|C_i|+|C_j|\leq2\lfloor\frac{n}{r}\rfloor$. Since $\es(G)=\lfloor \frac{n}{r}\rfloor^2$, we have $|C_i|=|C_j|=\lfloor\frac{n}{r}\rfloor$. Moveover, we have $|C_k|\geq\lfloor\frac{n}{r}\rfloor$ for $1\leq k\leq r$, since otherwise $\es(G)\leq |C_1||C_2|< \lfloor \frac{n}{r}\rfloor^2$, a contradiction. Thus $|C_1|=|C_2|=\lfloor\frac{n}{r}\rfloor$. %By the arbitrariness of $r$-coloring, we have $c^*(G)=\lfloor\frac{n}{r}\rfloor$.
\\[0.8ex]
\indent (2) Suppose $|C_i|=\lfloor\frac{n}{r}\rfloor$ and there exists some $v\in C_i$ and $j\in[r]\less\{i\}$ such that $e(v, C_j)<\lfloor\frac{n}{r}\rfloor$. We take a color class $C_k$ with $\lfloor\frac{n}{r}\rfloor$ vertices, which is different from $C_i$. This is possible because $|C_1|=|C_2|=\lfloor\frac{n}{r}\rfloor$.  Note that $k$ and $j$ are not necessarily distinct. Then we delete the edge set $E(v, C_j)\cup E(C_i\less\{v\}, C_k)$ and get an $(r-1)$-coloring
with color class set $\{C_1, \ldots, C_k\cup(C_i\less\{v\}), \ldots,C_j\cup\{v\},\ldots, C_r\}\less \{C_i\}$. Notice that $|E(v, C_j)\cup E(C_i\less\{v\}, C_k)|<\lfloor \frac{n}{r}\rfloor^2$. Thus $\es(G)< \lfloor \frac{n}{r}\rfloor^2$, a contradiction.

 Suppose $|C_i|>\lfloor\frac{n}{r}\rfloor$ and $\sum_{v_s\in C_i}\ell_s< {\lfloor\frac{n}{r}\rfloor}^2$, where $\ell_s=\min\{e(v_s, C_j):\ v_s\in C_i$, $j\in[r]\less\{i\}\}$. Let $C^s$ be one of the corresponding color classes when $\ell_s$ is taken for $v_s$. Then for any $v_s\in C_i$, we delete the edge set $E(v_s, C^s)$ and get an $(r-1)$-coloring by putting $v_s$ in $C^s$. Thus $\es(G)< \lfloor \frac{n}{r}\rfloor^2$, a contradiction.
\end{proof}

Recall that a graph coloring $(C_1,\ldots C_k)$ is {\em equitable}~\cite{erdos-1964} if $| |C_i| - |C_j| | \le 1$ holds for all $i\ne j$. Hence all the colorings from Theorem~\ref{thm2.1}(i) are equitable and consequently,  the corresponding extremal graphs have the same chromatic number and the equitable chromatic number. (See~\cite{heckel-2020, li-2020} for a couple of recent investigations of the equitable chromatic number.)

\begin{thm}
Let $G$ be a graph of order $n$, where $n\equiv 2 \pmod 3$, and with $\chi(G)=3$. If any  $3$-coloring of $G$ satisfies (1)-(3) of Theorem \ref{thm2.1}(i), then $\es(G)=\lfloor \frac{n}{3}\rfloor\lfloor \frac{n}{3}+1\rfloor$.
\end{thm}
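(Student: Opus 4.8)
The plan is to establish the matching lower bound $\es(G)\ge \lfloor\frac n3\rfloor\lfloor\frac n3+1\rfloor$, since the reverse inequality is exactly~\eqref{eq:upper}. Write $m=\lfloor\frac n3\rfloor$, so that $n=3m+2$ and the target value is $m(m+1)$. Because $\chi(G)=3$, lowering the chromatic number means making $G$ bipartite, and the minimum number of edges whose deletion makes a graph bipartite equals $\min_{(A,B)}\bigl(e_G(A)+e_G(B)\bigr)$, the minimum over all vertex bipartitions $V(G)=A\cup B$ of the number $e_G(A)+e_G(B)$ of edges having both endpoints in the same part. Thus it suffices to show that every bipartition leaves at least $m(m+1)$ such monochromatic edges. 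I would fix a $3$-coloring $(C_1,C_2,C_3)$ of $G$; by hypothesis it satisfies (1)--(3), so $|C_1|=m$ and $|C_2|=|C_3|=m+1$.

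Next I would fix an arbitrary bipartition $(A,B)$ and set $a_i=|C_i\cap A|$ and $b_i=|C_i\cap B|$, so $a_i+b_i=|C_i|$. Since each $C_i$ is independent, the monochromatic edges come only from pairs of distinct classes. By (2) the pairs $(C_1,C_2)$ and $(C_1,C_3)$ induce complete bipartite graphs, contributing exactly $a_1a_2+b_1b_2$ and $a_1a_3+b_1b_3$ monochromatic edges. For the pair $(C_2,C_3)$ I would invoke (3): as $|C_2|=|C_3|=m+1$, every vertex of $C_2$ (resp.\ $C_3$) has at most one non-neighbour in $C_3$ (resp.\ $C_2$). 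Counting the non-edges between $C_2\cap A$ and $C_3\cap A$ from either side therefore bounds their number by $\min(a_2,a_3)$, so this pair contributes at least $a_2a_3-\min(a_2,a_3)$ monochromatic edges inside $A$, and likewise at least $b_2b_3-\min(b_2,b_3)$ inside $B$. Summing yields $e_G(A)+e_G(B)\ge T-\min(a_2,a_3)-\min(b_2,b_3)$, where $T=\sum_{i<j}(a_ia_j+b_ib_j)$.

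It then remains to prove the arithmetic inequality $T-\min(a_2,a_3)-\min(b_2,b_3)\ge m(m+1)$ over all admissible integer vectors. I would first note that $T$ is linear in $a_1$ (the subtracted terms do not involve $a_1$), so its minimum over $a_1\in[0,m]$ is attained at $a_1=0$ or $a_1=m$, according to the sign of $a_2+a_3-(m+1)$. Assuming without loss of generality $a_2\le a_3$ (so $\min(a_2,a_3)=a_2$ and $\min(b_2,b_3)=b_3=m+1-a_3$), the case $a_2+a_3\le m+1$ uses $a_1=m$ and collapses, after substitution, to $m(m+1)+2a_2(a_3-1)\ge m(m+1)$, which is immediate. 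The case $a_2+a_3\ge m+1$ uses $a_1=0$ and is the delicate one: since the bound is decreasing in $a_2$, the minimum is driven to $a_2=a_3=u$, where the claim reduces to $2(u-m)(u-m-1)\ge0$.

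The main obstacle is precisely this last step. The quadratic $2(u-m)(u-m-1)$ is \emph{negative} on the real interval $(m,m+1)$, dipping to $-\tfrac12$ at $u=m+\tfrac12$, so the continuous relaxation of the bound falls just below $m(m+1)$; it is integrality that rescues the argument, since $u$ counts vertices and hence $u\in\{m,m+1\}$ on this interval, both giving value $0$, while all smaller integers give a positive value. This shows $e_G(A)+e_G(B)\ge m(m+1)$ for every bipartition, which gives $\es(G)\ge m(m+1)$ and, with~\eqref{eq:upper}, completes the proof. I expect the only genuinely fiddly part to be the bookkeeping in the two-case reduction, namely verifying that the endpoints in $a_1$ and the reduction $a_2=a_3$ really locate the minimum over the integer lattice; the rest is forced directly by (1)--(3).
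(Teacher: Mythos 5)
Your proof is correct, but it takes a genuinely different route from the paper. The paper first normalizes the graph: since deleting edges cannot increase $\es$, it passes to the spanning subgraph in which every vertex of $C_2$ has exactly $m=\lfloor n/3\rfloor$ neighbours in $C_3$ and vice versa (while $G[C_1\cup C_i]$ stays complete bipartite), and then runs a triangle-counting argument: in this subgraph every edge lies in exactly $m$ triangles and the total number of triangles is $m\cdot m(m+1)$, so deleting any $m(m+1)-1$ edges destroys at most $m\bigl(m(m+1)-1\bigr)<m^2(m+1)$ triangles, leaving a triangle and hence a $3$-chromatic graph. Your argument instead uses the identity $\es(G)=\min_{(A,B)}\bigl(e_G(A)+e_G(B)\bigr)$ for $3$-chromatic graphs and bounds the monochromatic edge count of an arbitrary bipartition via the quadratic optimization you describe; I checked the reduction (the coefficient of $a_1$ is $2(a_2+a_3-m-1)$, the case $a_1=m$ collapses to $m(m+1)+2a_2(a_3-1)$, the case $a_1=0$ is decreasing in $a_2$ and reduces at $a_2=a_3=u$ to $2(u-m)(u-m-1)\ge 0$, which holds for integers) and it is sound. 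The paper's proof is shorter and more elegant, but its normalization step quietly assumes that an $m$-regular spanning subgraph of $G[C_2\cup C_3]$ exists (true, since the bipartite complement is a matching, but unstated); your proof is more computational yet entirely self-contained, works directly with the given graph without any normalization, and makes explicit the interesting fact that the bound is tight only because of integrality. Both are valid proofs of the statement.
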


\begin{proof}
Let $c$ be a $3$-coloring of $G$ satisfying (1)-(3) of Theorem \ref{thm2.1}(i). Let $\{i,j\}=\{2,3\}$. For $v\in C_i$ we may let $e(v, C_j)=\lfloor\frac{n}{3}\rfloor$ (as adding edges to a graph cannot decrease its $\chi$-stability index). Since for any $e\in E(G)$, $e$ lies in exactly $\lfloor\frac{n}{3}\rfloor$ subgraphs $K_3$, the graph $G - e$ has at most $\lfloor\frac{n}{3}\rfloor$  fewer subgraphs
isomorphic to $K_3$ than $G$. Let $F\subseteq E(G)$ with $|F|=\lfloor \frac{n}{3}\rfloor\lfloor \frac{n}{3}+1\rfloor-1$. Then the graph $G\less F$ has at most
$\lfloor \frac{n}{3}\rfloor(\lfloor \frac{n}{3}\rfloor\lfloor \frac{n}{3}+1\rfloor-1)$ fewer subgraphs $K_3$ than $G$. Since $G$ has $\lfloor \frac{n}{3}\rfloor\lfloor \frac{n}{3}\rfloor\lfloor \frac{n}{3}+1\rfloor$ subgraphs $K_3$, we thus infer
that $G\less F$ has at least one subgraph $K_3$ and consequently $\chi(G\less F)=3$. Hence, $\es(G)=\lfloor \frac{n}{3}\rfloor\lfloor \frac{n}{3}+1\rfloor$.
\end{proof}

Let $G$ be a graph with $n$ vertices and $r=\chi(G)$. Note that when $r=5$ and $n\equiv 4 \pmod 5$, the conditions (1)-(3) in Theorem~\ref{thm2.1}(i) are not sufficient. Let $G_{12}$ be the graph from Fig.~\ref{fig2}, and let $G_{14}$ be obtained from $G_{12}$ by adding two new vertices $u_0$ and $v_0$, and connecting $u_0$ and $v_0$ to all vertices of $G_{12}$. Then we have the following result.

\begin{figure}[htbp]
\begin{center}
\includegraphics[scale=0.3]{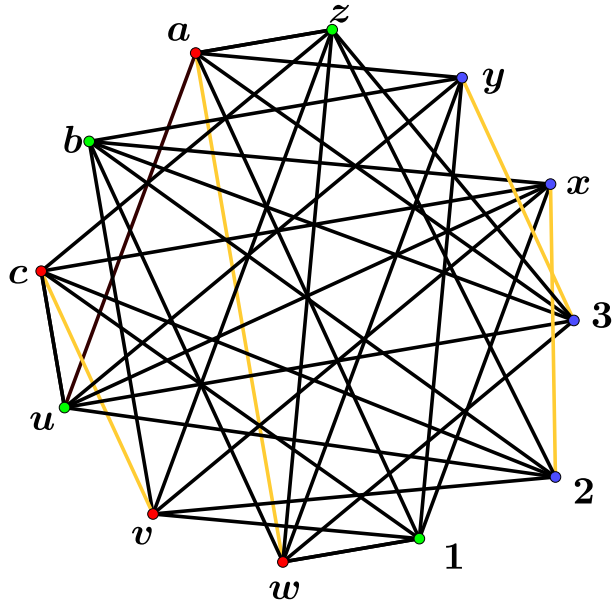}
\caption{Graph $G_{12}$.}\label{fig2}
\end{center}
\end{figure}

\begin{prop}\label{prop2}
The graph $G_{14}$ satisfies conditions (1)-(3) of Theorem~\ref{thm2.1}(i), but $es_\chi(G_{14})<\lfloor\frac{n}{r}\rfloor\lfloor\frac{n}{r}+1\rfloor=6$.
\end{prop}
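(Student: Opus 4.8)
The plan is to treat the two assertions separately---first that every $5$-coloring of $G_{14}$ has the shape forced by conditions (1)--(3), and second that $\es(G_{14})\le 5$---and to note that both reduce to the behavior of the $4$-colorings of the building block $G_{12}$. So I would begin by analyzing $G_{12}$ on its own.

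\textbf{Step 1 (the colorings of $G_{12}$).} Reading the structure off Fig.~\ref{fig2}, I would first confirm $\chi(G_{12})=4$ by exhibiting one $4$-coloring together with an obstruction (a $K_4$, or an odd-wheel-type configuration) that rules out three colors. The crucial and most delicate point is \emph{rigidity}: I want to show that every proper $4$-coloring of $G_{12}$ partitions $V(G_{12})$ into four independent triples, and that each vertex has at least two neighbors in each of the three triples not containing it. This is the combinatorial heart of the argument and I expect it to be the main obstacle, because it is a statement about \emph{all} colorings rather than one distinguished coloring; I would prove it by propagating forced colors from the densest part of $G_{12}$ (a fixed $K_4$, or a maximum-degree vertex) along the edges visible in the figure, ruling out unbalanced class sizes such as $6,2,2,2$ and checking the two-neighbor condition locally.

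\textbf{Step 2 (lifting to $G_{14}$ and verifying (1)--(3)).} By construction $u_0$ and $v_0$ are non-adjacent and each adjacent to all of $G_{12}$ (making them adjacent would create singleton color classes and contradict (1), so this reading is forced), hence in any proper coloring the colors of $u_0,v_0$ avoid every color used on $G_{12}$. Were they distinct, $G_{12}$ would be left with only three colors, contradicting $\chi(G_{12})=4$; thus $u_0,v_0$ share one color and $G_{12}$ uses the other four. Consequently at least $4+1=5$ colors are needed and five suffice, so $\chi(G_{14})=5$ and every $5$-coloring is the pair $C_1=\{u_0,v_0\}$ together with a $4$-coloring of $G_{12}$. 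Now (1) follows from Step 1 (class sizes $2,3,3,3,3$); (2) holds because $u_0,v_0$ are joined to all of each $C_i\subseteq V(G_{12})$ while $u_0v_0\notin E(G_{14})$, so $G_{14}[C_1\cup C_i]$ is complete bipartite; and (3) splits into the trivial cases $v\in C_1$ (then $e(v,C_j)=3$) and $j=1$ (then $e(v,C_1)=2$), leaving only $v,C_j\subseteq V(G_{12})$, which is exactly the ``at least two neighbors in each other triple'' conclusion of Step 1.

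\textbf{Step 3 (the bound $\es(G_{14})\le 5$).} Here I would exhibit an explicit set $F\subseteq E(G_{12})$ with $|F|\le 5$ and $\chi(G_{12}-F)=3$, read off the figure by destroying the odd-cycle and $K_4$ obstructions with as few deletions as possible. Given such $F$, color $G_{12}-F$ with $\{1,2,3\}$ and assign both $u_0$ and $v_0$ the color $4$; since $u_0v_0\notin E(G_{14})$ this is a proper $4$-coloring of $G_{14}-F$, so $\chi(G_{14}-F)\le 4<5$ and therefore $\es(G_{14})\le|F|\le 5<6$. In short, the construction is engineered so that $\es(G_{14})\le\es(G_{12})$, and the entire force of the example lies in $G_{12}$ being sparse enough to satisfy $\es(G_{12})\le 5$ while the lift $G_{14}$ still meets all the necessary conditions (1)--(3); pinning down the minimal such $F$ is the only remaining computation.
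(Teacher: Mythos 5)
Your architecture is the right one and is essentially the paper's: force the unique $5$-coloring of $G_{14}$ (equivalently, the rigidity of the $4$-colorings of $G_{12}$), deduce (1)--(3) from the fact that $u_0,v_0$ are nonadjacent and joined to all of $G_{12}$, and then beat the bound $6$ by deleting a small edge set inside $G_{12}$. Your Step 2 is correct and complete as stated. The problem is that Steps 1 and 3 --- which between them carry the entire content of the proposition --- are left as announced computations rather than carried out. ``Propagate forced colors from the densest part of $G_{12}$'' and ``pin down the minimal $F$ by destroying the odd-cycle and $K_4$ obstructions'' are plans, not proofs, and for a specific $12$-vertex graph there is no way to certify them without actually doing the case analysis. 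As written, the proposal proves nothing beyond the (easy) reduction of $G_{14}$ to $G_{12}$.

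For comparison, the paper discharges Step 1 not by color propagation but by a computer search (SageMath) listing \emph{all} independent sets of $G_{14}$ of size at least $3$: they are exactly $A=\{a,b,c\}$, $B=\{u,v,w\}$, $C=\{1,2,3\}$, $D=\{x,y,z\}$, the set $\{b,u,1,z\}$, and its $3$-subsets. Since any $5$-coloring of a $14$-vertex graph needs four pairwise disjoint classes of size $3$, and $\{b,u,1,z\}$ meets each of $A,B,C,D$, the only candidate is $\{u_0,v_0\},A,B,C,D$; moreover this same list yields condition (3) directly (a vertex $v$ with two non-neighbors in a triple $C_j\not\ni v$ would create an independent triple outside the list). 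This is a cleaner single stroke than your rigidity-by-propagation, and it is worth noting that the authors evidently did not find a hand argument. For Step 3 the paper exhibits the explicit set $F=\{cv,\,aw,\,3y,\,2x\}$ of four edges of $G_{12}$, after whose removal $\{u_0,v_0\},\{a,c,v,w\},\{b,u,1,z\},\{2,3,x,y\}$ is a proper $4$-coloring, so in fact $\es(G_{14})\le 4$, stronger than the $\le 5$ you aim for. If you want to salvage your write-up, you must replace the two placeholders with concrete verifications of this kind.
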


\begin{proof}
We first show that $\chi(G_{14})=5$.
Let $A=\{a,b,c\}$, $B=\{u,v,w\}$, $C=\{1,2,3\}$, and $D=\{x,y,z\}$. We claim that $\chi(G_{14})=5$ and that $G_{14}$ has a unique $5$-coloring. With a computer search (using SageMath), we found all independent sets of $G_{14}$ with at least three vertices: $A$, $B$, $C$, $D$, $\{b,u,1,z\}$, and each $X\subseteq\{b,u,1,z\}$ with $|X|=3$. So, if any three vertices of $\{b,u,1,z\}$ have the same color under some proper coloring $c:V(G_{14})\rightarrow [k]$ of $G_{14}$, then $k\geq6$. Thus $\chi(G_{14})=5$ and the unique $5$-coloring has color classes $\{u_0,v_0\}$, $A$, $B$, $C$, $D$. Therefore, the graph $G_{14}$ satisfies conditions (1)-(3) of Theorem~\ref{thm2.1}(i).

On the other hand, by deleting the edges $cv$, $aw$, $3y$, and $2x$  (colored orange in the figure), we can get a $4$-coloring  with color classes $\{u_0,v_0\}$, $\{a,c,v,w\}$, $\{b,u,1,z\}$, $\{2,3,x,y\}$. Therefore, $\es(G_{14})\leq4$.
\end{proof}

\noindent{\bf Acknowledgements.}
Huang was partially supported by the National Natural Science Foundation of China (11801284) and the
Fundamental Research Funds for the Central Universities, Nankai University.
Lei, Lian and Shi were partially supported by the China-Slovenia
bilateral project ``Some topics in modern graph theory" (No. 12-6), the National Natural Science Foundation of China and the
Fundamental Research Funds for the Central Universities, Nankai University.
Klav\v zar acknowledges the financial support from the Slovenian Research Agency (research core funding No.\ P1-0297 and projects J1-9109, J1-1693, N1-0095, N1-0108).

\end{document}